\theoremstyle{thmstyleone}%
\newtheorem{theorem}{Theorem}%  meant for continuous numbers
\newtheorem{lemma}[theorem]{Lemma}
\newtheorem{corollary}[theorem]{Corollary}
\theoremstyle{thmstyletwo}%
\theoremstyle{thmstylethree}%
\newtheorem{definition}{Definition}%
\newcommand {\mm}[1] {\ifmmode{#1}\else{\mbox{\(#1\)}}\fi}
\newcommand {\ceiling}[1] {{\left\lceil #1 \right\rceil}}
\newcommand{\ignore}[1]{}
\newsavebox{\smallProofsym}                 
\newcommand{\Rspace}        {\mm{{\mathbb R}}}
\newcommand{\Zspace}        {\mm{{\mathbb Z}}}
\newcommand{\Cech}[1]       {\mm{{\check{C}}_{#1}}}
\newcommand{\Hgroup}[1]     {\mm{\sf H}_{#1}}
\newcommand{\Betti}[1]      {\mm{\beta}_{#1}}
\newcommand{\Hausdorff}[2]  {\mm{H}{({#1},{#2})}}
\newcommand{\Mesh}[1]       {\mm{{\rm mesh}{({#1})}}}
\newcommand{\maxBetti}[2]   {\mm{{M}_{{#1},{#2}}}}
\newcommand{\maxBettiP}[3]  {\mm{{M}_{{#1},{#2},{#3}}}}
\newcommand{\Glue}[2]       {\mm{{\gamma}|_{{#1}\sim{#2}}}}
\newcommand{\Sweep}[2]      {\mm{{\gamma}|^{{#1}\sim{#2}}}}
\newcommand{\Star}[2]       {\mm{\rm St}_{#1}{({#2})}}
\newcommand{\ee}            {\mm{\varepsilon}}
\newcommand{\Alpha}         {\mm{\mathrm A}}
\newcommand{\dist}[2]       {\mm{\|{#1}-{#2}\|}}
\newcommand{\card}[1]       {\mm{{\rm card\,}{#1}}}
\newcommand{\Skip}[1]       {}
\begin{document}

\title[Maximum Persistent Betti Numbers of \v{C}ech Complexes]{Maximum Persistent Betti Numbers of \v{C}ech Complexes}

%%=============================================================%%
%% GivenName	-> \fnm{Joergen W.}
%% Particle	-> \spfx{van der} -> surname prefix
%% FamilyName	-> \sur{Ploeg}
%% Suffix	-> \sfx{IV}
%% \author*[1,2]{\fnm{Joergen W.} \spfx{van der} \sur{Ploeg} 
%%  \sfx{IV}}\email{iauthor@gmail.com}
%%=============================================================%%

\author[1]{\fnm{Herbert} \sur{Edelsbrunner}}\email{herbert.edelsbrunner@ist.ac.at}
%\equalcont{These authors contributed equally to this work.}

\author[2]{\fnm{Matthew} \sur{Kahle}}\email{mkahle@math.osu.edu}
%\equalcont{These authors contributed equally to this work.}

\author[3]{\fnm{Shu} \sur{Kanazawa}}\email{s.kanazawa@qmul.ac.uk}
%\equalcont{All three authors contributed equally to this work.}

\affil[1]{\orgname{ISTA (Institute of Science and Technology Austria)}, \orgaddress{\street{Am Campus 1}, \city{3400 Kloster\-neu\-burg}, \country{Austria}}}

\affil[2]{\orgdiv{Department of Mathematics}, \orgname{Ohio State University}, \orgaddress{\street{231 W.\ 18th Ave.}, \city{Columbus}, \postcode{43210}, \state{Ohio}, \country{United States}}}

\affil[3]{\orgdiv{School of Mathematical Sciences}, \orgname{Queen Mary University of London}, \orgaddress{\street{Mile End Road}, \city{London}, \postcode{E1 4NS}, \country{United Kingdom}}}

%%==================================%%
%% Sample for unstructured abstract %%
%%==================================%%

\abstract{
This note proves that only a linear number of holes in a \v{C}ech complex of $n$ points in $\mathbb{R}^d$ can persist over an interval of constant length.
Specifically, for any fixed dimension $p<d$ and fixed $\varepsilon > 0$, the number of $p$-dimensional holes in the \v{C}ech complex at radius $1$ that persist to radius $1 + \varepsilon$ is bounded above by a constant times $n$, where $n$ is the number of points.
The proof uses a packing argument supported by relating the \v{C}ech complexes with corresponding snap complexes over the cells in a partition of space.
The argument is self-contained and elementary, relying on geometric and combinatorial constructions rather than on the existing theory of sparse approximations or interleavings.
The bound also applies to Alpha complexes and Vietoris--Rips complexes.
While our result can be inferred from prior work on sparse filtrations, to our knowledge, no explicit statement or direct proof of this bound appears in the literature.
}

\keywords{\v{C}ech complexes, Betti numbers, persistent homology, snap complexes}

%%\pacs[JEL Classification]{D8, H51}

%%\pacs[MSC Classification]{35A01, 65L10, 65L12, 65L20, 65L70}

\maketitle

%%%%%%%%%%%%%%%%%%%%%%%%%%%%%%%%%%%%%%%%%%%%%%%
%%%%%%%%%%%%%%%%%%%%%%%%%%%%%%%%%%%%%%%%%%%%%%%
\section{Introduction}
\label{sec:1}
%%%%%%%%%%%%%%%%%%%%%%%%%%%%%%%%%%%%%%%%%%%%%%%
%%%%%%%%%%%%%%%%%%%%%%%%%%%%%%%%%%%%%%%%%%%%%%%

What is the maximum number of holes created by $n$ possibly overlapping closed unit balls in a Euclidean space, and how big can they be?
To move toward a more precise formulation of this question, let $p < d$ be two positive integers, and consider the asymptotic behavior of
\begin{align}
  \maxBetti{p}{d} (n) &= \max \left\{ \Betti{p} \left( \bigcup\nolimits_{x \in A} B(x, 1) \right) | A \subseteq \Rspace^d, \card{A} = n \right\}
  \label{eqn:Mpd}
\end{align}
as $n\to\infty$.
Here, $B(x, 1)$ is the closed ball of radius $1$ centered at $x$, and $\Betti{p}$ of the union of such balls is the $p$-th Betti number of this space.
The \emph{\v{C}ech complex} of $A$ for radius $r$, denoted by $\Cech{r} = \Cech{r}(A)$, is the simplicial complex whose vertices are the points in $A$, and whose simplices are the subsets of points such that the closed balls of radius $r$ centered at these points have a non-empty common intersection.
By the Nerve Theorem, the \v{C}ech complex for radius $r=1$ has the homotopy type of the union of the unit balls in \eqref{eqn:Mpd}.
Therefore, $\maxBetti{p}{d} (n)$ is also the maximum $p$-th Betti number of $\Cech{1}$, for any set of $n$ points in $\Rspace^d$.

\smallskip
Recently, Edelsbrunner and Pach~\citep{EdPa24} proved that $\maxBetti{p}{d} (n) = \Theta(n^{\min \{ p + 1, \ceiling{d/2} \}})$.
For example, $\maxBetti{1}{2} (n)$ grows linearly in $n$, but $\maxBetti{1}{3} (n)$ and $\maxBetti{2}{3} (n)$ grow quadratically in $n$.
The upper bound is easily derived from the Upper Bound Theorem for convex polytopes, see e.g.\ \citep{Zie95}, and their main contribution is the actual construction that proves that this upper bound is asymptotically tight.
However, most holes in their construction appear to be small: they vanish when the balls are slightly enlarged.
In the language of persistent homology, they have short lifetimes.
This motivates us to fix $\ee > 0$ and to look at
\begin{align}
  \maxBettiP{p}{d}{\ee} (n) &= \max \{\Betti{p}(\Cech{1}, \Cech{1 + \ee}) \mid  A \subseteq \Rspace^d, \card{A} = n\} ,
\end{align}
where $\Betti{p}(\Cech{1}, \Cech{1 + \ee})$ is the $p$-th persistent Betti number of the inclusion of $\Cech{1}$ in $\Cech{1 + \ee}$. 
In other words, this is the number of $p$-dimensional holes in $\Cech{1}$ that are still holes in $\Cech{1 + \ee}$.
Note that the scale need not be fixed at $r=1$: by rescaling the point configuration that attains the maximum, we may consider the persistent Betti number $\Betti{p}(\Cech{r}, \Cech{(1 + \ee)r})$ for any $r\ge0$.
When we fix the parameter $\ee > 0$, most of the holes in Edelsbrunner and Pach's construction are no longer counted since they do not persist even to radius $1+\ee$, so it is not surprising that the asymptotic behavior of $\maxBettiP{p}{d}{\ee} (n)$ is different from that of $\maxBetti{p}{d} (n)$, and we show that $\maxBettiP {p}{d}{\ee} (n)$ grows only linearly in $n$.
%, for every fixed $p$ and $d$.
In words, the number of holes that cover a fixed interval of positive length in the \v{C}ech filtration is at most some constant, depending only on $\ee$ and the dimension $d$, times the number of points.
It may also be of interest to consider the case in which the constant $\ee>0$ depends on $n$.
For further details, see Section~\ref{sec:5}.

\smallskip
The \v{C}ech complex has the same homotopy type as the Alpha complex for the same points and the same radius \cite[Section~III.4]{EdHa10}, which is a subcomplex of the Delaunay mosaic of the points \citep{Del34}.
Hence, $\maxBetti{p}{d} (n)$ is also the maximum $p$-th Betti number of the Alpha complex of $n$ points in $\Rspace^d$, and $\maxBettiP{p}{d}{\ee} (n)$ is also the maximum $p$-th persistent Betti number of the Alpha complex for unit radius included in that of radius $1+\ee$.
Less obviously, our linear upper bound for $\maxBettiP{p}{d}{\ee} (n)$ extends to the inclusion of Vietoris--Rips complexes for unit radius in that of radius $1 + \ee$.
While we do not know of a direct connection, every step in our proof of the bound for \v{C}ech complexes extends to Vietoris--Rips complexes.
The resulting linear upper bound for the persistent Betti numbers should be compared to the bounds of Goff~\citep{Gof11} on the (non-persistent) Betti numbers of Vietoris--Rips complexes.
For $p=1$, he shows a linear upper bound in all dimensions, which is stronger than our linear upper bound for the persistent Betti numbers.
However, for $p > 1$, his bound is only $o (n^p)$, which is much weaker than our linear upper bound for the persistent Betti numbers.

\smallskip
\subsubsection*{Related work on sparse approximations of filtrations}
Although our initial motivation was to contrast the asymptotics of the maximum Betti number $\maxBetti{p}{d} (n)$ with those of the maximum persistent Betti number $\maxBettiP{p}{d}{\ee} (n)$ in the context mentioned above, we recognize that our main result is closely related to the literature on sparse approximations of filtrations.
In this line of work, a substantial body of literature shows that the \v{C}ech and Vietoris--Rips filtrations can be sparsified---significantly reducing their size---while still capturing similar topological information.

\smallskip
Persistent homology records how topological features (such as components, loops, and voids) appear and disappear as the scale parameter varies in a filtration; the birth-death intervals (bars) of these features are summarized in a barcode.
Many sparsification results are stated in terms of (multiplicative) interleavings.
%Informally (and sufficient for our purposes),
Informally, and sufficient for our purposes, a $(1+\ee)$-interleaving between filtrations means that there are natural comparison maps consistently relating the complex at scale $\alpha$ in one filtration to the complex at scale $(1+\ee)\alpha$ in the other, and conversely.
In particular, each topological feature whose lifetime strictly exceeds a factor of $1+\ee$; that is: a bar that strictly covers an interval $[\alpha,(1+\ee)\alpha)$ for some $\alpha\ge0$ in one filtration has a corresponding feature in the other.

\smallskip
A seminal result is Sheehy's sparse Vietoris--Rips filtration~\citep{She13}.
For an $n$-point set in Euclidean space (more generally, for an $n$-point metric space of bounded doubling dimension), he constructs a linear-size ($O(n)$-size) filtration, based on a hierarchical net-tree, that is $(1+\ee)$-interleaved with the exact Vietoris--Rips filtration.
The hidden constant depends only on $\ee$ and the dimension of the point set.
The same philosophy has been developed for \v{C}ech filtrations.
Of particular relevance is Kerber--Sharathkumar's sparse \v{C}ech filtration~\citep{KerSha13}, which introduces the \emph{well-separated simplicial decomposition (WSSD)}, a higher-dimensional analogue of the well-separated pair decomposition.
Using a WSSD, they construct a linear-size filtration that is $(1+\ee)$-interleaved with the exact \v{C}ech filtration.
Further refinements and alternative viewpoints include Botnan--Spreemann's \v{C}ech approximation via sequences of good covers~\citep{BoSp15} and the geometric perspective of Cavanna--Jahanseir--Sheehy, who show that a sparse complex can be realized as a nerve in one dimension higher~\citep{CaJaShe15}.
A high-dimensional lattice-based approach by Choudhary--Kerber--Raghvendra yields polynomial-size approximations---independent of the dimensionality of the point set---together with a super-polynomial lower bound on the size required for a high-accuracy approximation, thereby demonstrating the inherent accuracy/size trade-off in high dimension~\citep{ChoKerRa19}.
For developments in the two-parameter setting, we refer the reader to~\citep{BuDoKer23,LeMc24} and the references therein.

\subsubsection*{How sparse approximation implies our linear bound on $\maxBettiP{p}{d}{\ee} (n)$}
We herein explain how earlier results on sparse approximation imply our linear bound on the maximum persistent Betti number $\maxBettiP{p}{d}{\ee} (n)$.
For a fixed $\ee>0$, suppose that in the \v{C}ech filtration $(\Cech{r})_{r\ge0}$ there exists a super-linear number of bars that strictly cover an interval $[\alpha,(1+\ee)\alpha)$ for some $\alpha\ge0$.
Any proper sparsification scheme described above produces a linear-size filtration that is (multiplicatively) $(1+\ee)$-interleaved with the exact \v{C}ech filtration and therefore still carries all such bars.
However, a linear-size filtration can support only a linear number of bars, because each bar requires at least one simplex to witness its creation or destruction.
This contradiction shows that, in the exact \v{C}ech filtration, the number of bars whose death-to-birth ratio is strictly greater than $1+\ee$ grows only linearly.
Note that this argument handles all such longer bars simultaneously, across all scales.
%
%For $\ee>0$, there exists a linear-size filtration $(F_\alpha)_{\alpha\ge0}$---Kerber--Sharathkumar's sparse \v{C}ech filtration---that is $(\sqrt{1+\ee})$-interleaved with the exact \v{C}ech filtration $(\Cech{r})_{r\ge0}$
%This implies that the induced map on homology by the inclusion $\Cech{r}\to\Cech{(1+\ee)r}$ factors through the $p$-th homology group $\Hgroup{p}(F_{\sqrt{1+\ee}r})$.
%Therefore, $\Betti{p}(\Cech{r}, \Cech{(1 + \ee)r}) \le \Betti{p}(F_{\sqrt{1+\ee}r})$ and this is bounded above by the number of $p$-simplices in $F_{\sqrt{1+\ee}r}$,
%Since $(F_\alpha)_{\alpha\ge0}$ has linear-size, $\Betti{p}(\Cech{r}, \Cech{(1 + \ee)r})$ is at most some constant times the number of points.
%This constant can be chosen independently of the scale $r$.

\subsubsection*{Our contributions}
As mentioned already, our main result is a linear upper bound on the maximum persistent Betti numbers over any fixed interval of positive length.
Precisely, for fixed $0\le s<t$, the $p$-th persistent Betti number $\Betti{p}(\Cech{s}, \Cech{t})$ is bounded above by some constant times the number of points in $\Rspace^d$, where the constant depends only on the ratio $t/s$ and the dimension $d$.
By rescaling the point configuration, we may therefore focus on $\Betti{p}(\Cech{1}, \Cech{1+\ee})$ with $\ee=t/s-1>0$.

\smallskip
Our method proceeds as follows: we mesh the Euclidean space into hypercubes of diameter at most $\ee$ and identify the points lying in each cell.
Applying this identification to the \v{C}ech complex $\Cech{1}$ produces a smaller simplicial complex $Q_1$, which we call a \emph{snap complex} (see Definition~\ref{dfn:snap_complex}).
The key observation can be summarized as follows: there is a triangle of simplicial maps formed by $\Cech{1}\to Q_1\to\Cech{1+\ee}$ and $\Cech{1}\to\Cech{1+\ee}$, which commutes on the homology level.
Consequently, the rank of the induced map $\Hgroup{p}(\Cech{1})\to\Hgroup{p}(\Cech{1+\ee})$---that is, $\Betti{p}(\Cech{1}, \Cech{1+\ee})$---is bounded above by $\dim(\Hgroup{p}(Q_1))$, which is in fact linear in the number of the input points by a packing argument (see the remarks in Section~\ref{sec:3} for details).

\smallskip
Our snap complex is akin in spirit to earlier work, particularly the constructions of~\citep{KerSha13} and~\citep{ChoKerRa19}.
Both also mesh Euclidean space into hypercubes or permutahedra, yet their foci differ from that of our snap complex argument.
\begin{enumerate}
  \item In~\citep{KerSha13}, the construction of a multiplicatively $(1+\ee)$-interleaved filtration exploits the well-separated simplicial decomposition (WSSD).
  Intuitively, a WSSD decomposes the points into small clusters whose diameters are tiny compared with the distances separating them and groups these clusters into tuples so that every \v{C}ech simplex is captured by at least one such tuple.
  The construction is more sophisticated and delicate because the clusters must be small relative to their mutual distances to guarantee a multiplicative approximation across all scales.
  Our snap complex approach, based on a uniform mesh size, avoids this technical step and proceeds directly to the bound on the persistent Betti number, since we essentially focus on a single scale.
  \item In~\citep{ChoKerRa19}, Euclidean space $\Rspace^d$ is meshed into permutahedral cells.
  For a given finite point set, each cell containing at least one input point is taken as a vertex of a new simplicial complex, and simplices are defined as the nerve of these occupied cells.
  Taking an arbitrary base scale $\beta>0$ and varying the mesh size over the geometric progression $\beta c^k$ with $c=4(d+1)^2$, they obtain a discrete tower of such complexes, connected by natural simplicial maps, which approximates the Vietoris--Rips filtration within an $O(d)$ multiplicative factor while achieving smaller size bounds than Sheehy's sparse Vietoris--Rips filtration.
  In contrast, our snap complex construction takes into account not only the mesh size but also the scale $r$, which allows us to build a sparse filtration $(Q_r)_{r\ge0}$ of snap complexes that approximates the classical filtrations with arbitrary accuracy---though only in an additive sense---by refining the mesh.
\end{enumerate}

\smallskip
While our main result---a linear upper bound on persistent Betti numbers over a fixed interval of positive length---can be deduced from earlier results on linear-sized approximations of filtrations as discussed above, our objective is of a different nature.
We present a direct and self-contained proof based on elementary geometric and combinatorial arguments.
Our approach circumvents the technically more involved steps of filtration approximations that deal with all scales simultaneously in a multiplicative sense, and addresses directly the bound on the maximum persistent Betti number at a single scale, thereby offering a more accessible and clear exposition of this fundamental fact.
Moreover, to the best of our knowledge, no explicit statement of this linear bound on persistent Betti numbers exists, despite its noteworthy contrast with the tight polynomial upper bounds on (non-persistent) Betti numbers of \v{C}ech complexes by Edelsbrunner and Pach~\citep{EdPa24}.
We believe it is worthwhile to clearly articulate and prove this fact, both for its intrinsic interest and for potential applications in stochastic topology (see Section~\ref{sec:5} for further discussion).

\subsubsection*{Outline}
The outline of this paper is as follows.
Section~\ref{sec:2} proves technical lemmas about how cycles are maintained if we glue vertices in a simplicial complex.
Section~\ref{sec:3} introduces the snap complex of a \v{C}ech complex and relates its Betti numbers to the persistent Betti numbers of the \v{C}ech complex.
Section~\ref{sec:4} combines these preparations to prove the linear upper bound on the persistent Betti number of \v{C}ech complexes.
Section~\ref{sec:5} concludes the paper.

%%%%%%%%%%%%%%%%%%%%%%%%%%%%%%%%%%%%%%%%%%%%%%%
%%%%%%%%%%%%%%%%%%%%%%%%%%%%%%%%%%%%%%%%%%%%%%%
\section{Gluing Vertices}
\label{sec:2}
%%%%%%%%%%%%%%%%%%%%%%%%%%%%%%%%%%%%%%%%%%%%%%%
%%%%%%%%%%%%%%%%%%%%%%%%%%%%%%%%%%%%%%%%%%%%%%%

Let $A \subseteq \Rspace^d$ be finite.
By definition, the \v{C}ech complex of $A$ for radius $r \geq 0$ consists of all subsets $B \subseteq A$ such that the closed balls of radius $r$ centered at the points in $B$ have a non-empty common intersection.
Writing $r(B)$ for the radius of the smallest enclosing sphere of $B$, we have $B \in \Cech{r} (A)$ iff $r(B) \leq r$.
Letting $B' \subseteq \Rspace^d$ be another finite set, the \emph{Hausdorff distance} between $B$ and $B'$ is
\begin{align}
  \Hausdorff{B}{B'} &= \max \left\{ \max_{b \in B} \min_{b' \in B'} \dist{b}{b'} , \max_{b' \in B'} \min_{b \in B} \dist{b'}{b}\right\} .
\end{align}
We show that the difference between $r(B)$ and $r(B')$ is at most the Hausdorff distance between the two sets.
\begin{lemma}
  \label{lem:smallest_enclosing_spheres}
  $| r(B) - r(B') | \leq \Hausdorff{B}{B'}$.
\end{lemma}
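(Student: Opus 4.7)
The plan is to prove both inequalities $r(B') \leq r(B) + \Hausdorff{B}{B'}$ and $r(B) \leq r(B') + \Hausdorff{B}{B'}$ by a direct symmetric argument using the center of the smallest enclosing sphere of one set as a feasible center for an enclosing sphere of the other.

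First I would fix the smallest enclosing sphere of $B$, denote its center by $c \in \Rspace^d$, and observe that $\dist{b}{c} \leq r(B)$ for every $b \in B$. Then, for each $b' \in B'$, the definition of Hausdorff distance guarantees a point $b \in B$ with $\dist{b'}{b} \leq \Hausdorff{B}{B'}$, since $\max_{b' \in B'} \min_{b \in B} \dist{b'}{b}$ is one of the two terms whose maximum defines $\Hausdorff{B}{B'}$. By the triangle inequality,
\begin{align*}
  \dist{b'}{c} \leq \dist{b'}{b} + \dist{b}{c} \leq \Hausdorff{B}{B'} + r(B) .
\end{align*}
Hence $B'$ is contained in the closed ball of radius $r(B) + \Hausdorff{B}{B'}$ centered at $c$, which is one candidate enclosing ball; since $r(B')$ is the radius of the \emph{smallest} enclosing sphere, $r(B') \leq r(B) + \Hausdorff{B}{B'}$.

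Swapping the roles of $B$ and $B'$ and repeating the same argument with the center of the smallest enclosing sphere of $B'$ yields $r(B) \leq r(B') + \Hausdorff{B}{B'}$. Combining the two bounds gives $|r(B) - r(B')| \leq \Hausdorff{B}{B'}$. There is no real obstacle here: the only thing to keep in mind is to use the correct direction of the Hausdorff distance in each of the two symmetric steps (the $\max_{b' \in B'} \min_{b \in B}$ piece in the first, the $\max_{b \in B} \min_{b' \in B'}$ piece in the second), which is automatic because $\Hausdorff{B}{B'}$ is defined as the maximum of both.
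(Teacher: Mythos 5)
Your proof is correct and follows essentially the same route as the paper's: both use the center of the smallest enclosing sphere of one set as a candidate center for the other, enlarge the radius by the Hausdorff distance via the triangle inequality, and conclude by symmetry. Your version merely spells out the triangle inequality explicitly where the paper phrases it as containment in a union of balls.
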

\begin{proof}
  Let $x$ be the center of the smallest enclosing sphere of $B$,
  whose radius is $r(B)$.
  By definition of Hausdorff distance, $B'$ is contained in the union of balls of radius $\ee = \Hausdorff{B}{B'}$ centered at the points in $B$.
  Hence, the sphere with center $x$ and radius $r(B) + \ee$ encloses $B'$, so $r(B') \leq r(B) + \ee$.
  Symmetrically, $r(B) \leq r(B') + \ee$, so $r(B') - \ee \leq r(B) \leq r(B') + \ee$, which implies the claim.
\end{proof}

We employ Lemma~\ref{lem:smallest_enclosing_spheres} to reduce the size of a cycle.
%To explain how, we use standard terminology for homology with $\Zspace / 2 \Zspace$ coefficients, which we briefly review.
To explain how, we briefly review standard terminology from homology.
For simplicity, we work over the coefficient field $\Zspace / 2 \Zspace$, though the results extend to any field (see the remark immediately after Corollary~\ref{cor:persistent_Betti_numbers} for a more algebraic approach).
A \emph{$p$-chain}, $\gamma$, is a collection of $p$-simplices, and the \emph{vertices} of $\gamma$ are the vertices of its $p$-simplices.
To \emph{add} two $p$-chains means taking their symmetric difference: $\gamma + \gamma' = \gamma \oplus \gamma'$.
The \emph{boundary} of a $p$-simplex are its $(p-1)$-dimensional faces, and the \emph{boundary} of $\gamma$ is the sum of the boundaries of its $p$-simplices.
A \emph{$p$-cycle} is a $p$-chain with empty boundary, denoted by $\partial \gamma = 0$.
Two $p$-cycles are \emph{homologous}, denoted by $\gamma \sim \gamma'$, if there is a $(p+1)$-chain, $\Gamma$, that satisfies $\partial \Gamma = \gamma + \gamma'$.
In this case, we say $\Gamma$ is a \emph{filling} of $\gamma + \gamma'$.
Finally, $\gamma$ is \emph{trivial} if $\gamma \sim 0$.
\begin{definition}
  \label{dfn:gluing_vertices}
  Let $\gamma$ be a $p$-cycle and $x \neq y$ vertices of $\gamma$.
  To \emph{glue} $x$ and $y$, we substitute a new vertex, $z$, for $x$ and $y$ in all $p$-simplices, and write $\Glue{x}{y}$ for the resulting $p$-chain.
\end{definition}
After the substitution, a $p$-simplex that contains both, $x$ and $y$, is a $(p-1)$-simplex and thus implicitly removed from the $p$-cycle by the gluing operation.
%
%\smallskip
%To reason about the gluing of $x$ and $y$, we write $\Star{\gamma}{x}$ for the $p$-simplices in $\gamma$ that share $x$, and note that $\Star{\gamma}{x,y} = \Star{\gamma}{x} \cap \Star{\gamma}{y}$ are the $p$-simplices in $\gamma$ that share the edge connecting $x$ and $y$.
\begin{figure}[hbt]
    \centering \vspace{0.0in}
    \resizebox{!}{1.3in}{\input{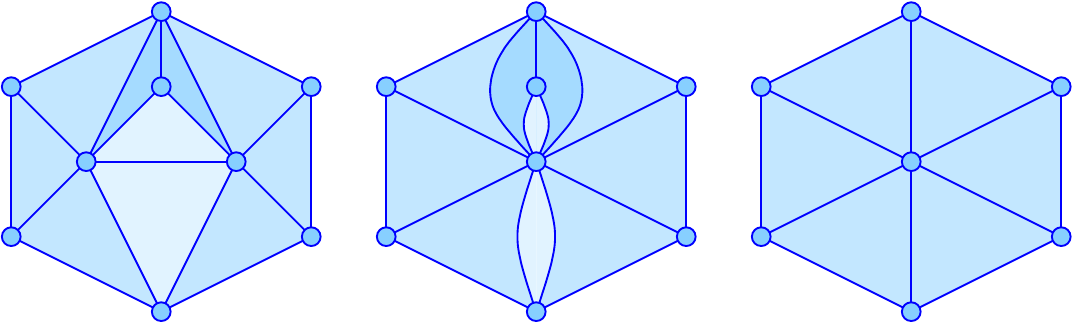_t}}
    \vspace{-0.0in}
    \caption{\footnotesize \emph{Left:} a portion of a $2$-cycle in which $x$ and $y$ belong to two triangles that share an edge different from the edge connecting $x$ to $y$.
    For better visibility, we shade the triangles in the stars of $x$ and $y$ \emph{dark}, \emph{light}, and \emph{in between} depending on whether they share such an edge, they belong to both stars, and neither, respectively.
    \emph{Middle:} the contraction of the edge connecting $x$ to $y$ produces two bi-gons and two triangles that share the same three vertices.
    \emph{Right:} these bi-gons and duplicate triangles are removed.}
    \label{fig:gluing}
\end{figure}
As illustrated in Figure~\ref{fig:gluing}, it is possible that after substituting $z$ for $x$ and $y$, we have two $p$-simplices with the same $p+1$ vertices, which, by definition, are two copies of the same $p$-simplex.
By the logic of modulo-$2$ arithmetic, the two copies cancel each other.
Substituting a new vertex $z$ for two distinct vertices $x$ and $y$ in a simplicial complex defines a simplicial map, as every simplex is mapped to a simplex in the new complex with $x$ and $y$ replaced by $z$.
Since the $p$-chain $\Glue{x}{y}$ is nothing but the image of the $p$-cycle $\gamma$ under this simplicial map, $\Glue{x}{y}$ is also a $p$-cycle.
We state the fact without proof, as it is elementary.
%We prove that $\Glue{x}{y}$ is a $p$-cycle.
\begin{lemma}
  \label{lem:gluing_maintains_cycles}
  Let $\gamma$ be a $p$-cycle and $x \neq y$ two vertices of $\gamma$.
  Then $\Glue{x}{y}$ is a $p$-cycle.
\end{lemma}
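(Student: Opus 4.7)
I would prove the lemma by realizing the gluing operation as the $p$-chain-level image of a simplicial vertex-identification map, and then invoking the standard fact that such a map commutes with the boundary. The plan is to define a vertex map $g$ from the vertex set of $\gamma$ to a new vertex set by $g(x) = g(y) = z$ and $g(v) = v$ for every other vertex $v$, and to extend it to $p$-chains by $g_{*}(\sigma) = g(\sigma)$ when $g$ restricted to $\sigma$ is injective and $g_{*}(\sigma) = 0$ otherwise. The only obstruction to injectivity on a $p$-simplex is $\sigma \supseteq \{x,y\}$, in which case $g(\sigma)$ has only $p$ vertices and is not a $p$-simplex at all; these are exactly the simplices that Definition~\ref{dfn:gluing_vertices} implicitly removes. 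Because we work over $\Zspace/2\Zspace$, any two $p$-simplices of $\gamma$ that differ only by having $x$ in one and $y$ in the other share the same image under $g$ and therefore cancel in $g_{*}(\gamma)$. This shows $g_{*}(\gamma) = \Glue{x}{y}$.

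Given this identification, the lemma reduces to the commutation $\partial g_{*} = g_{*} \partial$ on $p$-chains, which I would verify on a single $p$-simplex $\sigma = \{v_{0}, \ldots, v_{p}\}$ by case analysis. If $\sigma$ does not contain both $x$ and $y$, then $g$ is injective on $\sigma$ and on every one of its $(p-1)$-faces, and the matching $v_{i} \leftrightarrow g(v_{i})$ aligns face removals on the two sides term by term, so $\partial g_{*}(\sigma) = g_{*}(\partial \sigma)$. If $\sigma$ does contain both $x$ and $y$, then $g_{*}(\sigma) = 0$ and hence $\partial g_{*}(\sigma) = 0$; on the other side, $\partial \sigma$ has the two special faces $\sigma \setminus \{x\}$ and $\sigma \setminus \{y\}$, both of which map under $g_{*}$ to $(\sigma \setminus \{x,y\}) \cup \{z\}$ and therefore cancel mod $2$, while every other $(p-1)$-face of $\sigma$ still contains both $x$ and $y$ and is sent to $0$. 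Summing, $g_{*}(\partial \sigma) = 0$ as required, and the commutation gives $\partial \Glue{x}{y} = \partial g_{*}(\gamma) = g_{*}(\partial \gamma) = g_{*}(0) = 0$.

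The main piece of bookkeeping, and the step where I would be most careful, is precisely the case of a $p$-simplex of $\gamma$ that contains both $x$ and $y$: it disappears from $\Glue{x}{y}$ as a $p$-chain element, yet its two faces obtained by removing $x$ or $y$ have the same image in the new complex, and it is exactly this mod-$2$ cancellation that prevents a spurious $(p-1)$-simplex from appearing in the boundary. Once this is correctly accounted for via the induced map $g_{*}$, the rest of the argument is the routine verification above.
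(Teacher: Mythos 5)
Your proof is correct, but it takes a genuinely different route from the paper. You realize $\Glue{x}{y}$ as the image $g_{*}(\gamma)$ of the induced chain map of the vertex identification $g$ (with the standard convention that a simplex on which $g$ fails to be injective is sent to $0$), and then reduce the lemma to the chain-map identity $\partial g_{*} = g_{*} \partial$, which you verify one simplex at a time; the only delicate case, a $p$-simplex containing both $x$ and $y$, is handled exactly right: its image is $0$, while in $g_{*}(\partial\sigma)$ the two faces $\sigma\setminus\{x\}$ and $\sigma\setminus\{y\}$ have the same image and cancel over $\Zspace/2\Zspace$, and every other face still contains both $x$ and $y$ and is killed. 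The paper argues instead from the other end: it fixes a $(p-1)$-simplex of the glued chain and counts its cofaces directly, tracking how two $(p-1)$-simplices that differ only in $x$ versus $y$ merge into one (so their coface counts add, possibly minus $2$ for a shared coface in $\Star{\gamma}{x,y}$) and how duplicated $p$-simplices cancel in pairs, all of which preserves parity. Your version localizes the entire bookkeeping to a single simplex and lets linearity absorb all the global cancellations, which makes it cleaner, more systematic, and immediately reusable (e.g., for iterated gluings as in the proof of Lemma~\ref{lem:homologous_preimages}); the paper's version is more elementary and self-contained but requires the reader to trust a somewhat informal global accounting of merges and cancellations.
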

Writing $\Star{\gamma}{x}$ and $\Star{\gamma}{y}$ for the $p$-simplices in $\gamma$ that contains $x$ and $y$, respectively, we define
\[
\Sweep{x}{y} = z \cdot [\Star{\gamma}{x} \cup \Star{\gamma}{y}],
\]
where the multiplication means taking the cone over the $p$-simplices in the union of two stars; in more combinatorial terms, this corresponds to adding the new vertex $z$ to each set of $p+1$ vertices.
Intuitively, $\Sweep{x}{y}$ is the $(p+1)$-chain that is swept out by the $p$-simplices in $\Star{\gamma}{x} \cup \Star{\gamma}{y}$ as we move $x$ and $y$ on straight lines to $z$.
In the configuration displayed in Figure~\ref{fig:gluing}, this corresponds to the $3$-chain whose boundary consists of the $10$ and $6$ triangles in the left and right drawings, respectively.
It is not difficult to see that the boundary of this $(p+1)$-chain is the sum of the two $p$-cycles.
\begin{lemma}
  \label{lem:sweep_is_filling}
  Let $\gamma$ be a $p$-cycle, $x \neq y$ two vertices of $\gamma$, $\gamma' = \Glue{x}{y}$, and $\Gamma = \Sweep{x}{y}$.
  Then $\partial \Gamma = \gamma + \gamma'$.
\end{lemma}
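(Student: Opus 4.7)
The plan is to apply the cone boundary formula $\partial(z \cdot U) = U + z \cdot \partial U$ over $\Zspace/2\Zspace$, valid for any $p$-chain $U$ whose simplices do not contain $z$. I would verify this by checking it on a single simplex $\tau$: the $p+2$ faces of $\tau \cup \{z\}$ are $\tau$ itself together with $(\tau \setminus \{v\}) \cup \{z\}$ for $v \in \tau$, giving $\partial(\tau \cup \{z\}) = \tau + z \cdot \partial \tau$, and then extend linearly. Applied to $\Gamma = z \cdot U$, where $U$ is the $p$-chain in which each simplex of $\Star{\gamma}{x} \cup \Star{\gamma}{y}$ appears exactly once, this yields $\partial \Gamma = U + z \cdot \partial U$.

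Next I would decompose $\gamma$ and $\gamma'$ compatibly. Let $R$ collect the $p$-simplices of $\gamma$ that contain neither $x$ nor $y$, and set $S_x^\circ := \Star{\gamma}{x} \setminus \Star{\gamma}{x,y}$, $S_y^\circ := \Star{\gamma}{y} \setminus \Star{\gamma}{x,y}$, so that $\gamma = R + U$ with $U = S_x^\circ + S_y^\circ + \Star{\gamma}{x,y}$. Writing $f$ for the vertex substitution sending $x$ and $y$ to $z$, simplices of $R$ are fixed by $f$, simplices of $\Star{\gamma}{x,y}$ become $(p-1)$-dimensional and drop out, and simplices of $S_x^\circ \cup S_y^\circ$ map to $p$-simplices containing $z$. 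Hence $\gamma' = R + f(S_x^\circ) + f(S_y^\circ)$ and $\gamma + \gamma' = U + f(S_x^\circ) + f(S_y^\circ)$. From $\partial \gamma = 0$ I get $\partial U = \partial R$, so comparison with $\partial \Gamma$ reduces the claim to the identity $z \cdot \partial R = f(S_x^\circ) + f(S_y^\circ)$.

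Both sides are supported on $p$-simplices of the form $\rho \cup \{z\}$ with $\rho$ a $(p-1)$-simplex avoiding $x$ and $y$, so I would match coefficients at each such $\rho$. Applying $\partial \gamma = 0$ at $\rho$ and separating the contributions of cofaces with extra vertex $x$, $y$, or neither shows that the coefficient of $\rho$ in $\partial R$ equals $\mathbf{1}[\rho \cup \{x\} \in \gamma] + \mathbf{1}[\rho \cup \{y\} \in \gamma]$ modulo $2$, which is precisely the coefficient of $\rho \cup \{z\}$ in $f(S_x^\circ) + f(S_y^\circ)$. I expect the main obstacle to be the bookkeeping of cancellations when both $\rho \cup \{x\}, \rho \cup \{y\} \in \gamma$: the two copies of $\rho$ in $\partial R$ cancel, and the two images $f(\rho \cup \{x\}) = f(\rho \cup \{y\}) = \rho \cup \{z\}$ also cancel in $\gamma'$. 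A four-case analysis on whether $\rho \cup \{x\}$ and $\rho \cup \{y\}$ belong to $\gamma$ closes the argument.
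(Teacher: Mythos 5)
Your proposal is correct and follows the same underlying decomposition as the paper's proof: the paper observes that $\gamma + \gamma' = \Star{\gamma}{x} \cup \Star{\gamma}{y} \cup \Star{\gamma'}{z}$ and asserts that this union is $\partial \Gamma$ ``by construction,'' which is exactly your identity $\partial(z \cdot U) = U + z \cdot \partial U$ with $U = \Star{\gamma}{x} \cup \Star{\gamma}{y}$ and $z \cdot \partial U = z \cdot \partial R = f(S_x^\circ) + f(S_y^\circ) = \Star{\gamma'}{z}$. Your coefficient-matching at each $(p-1)$-simplex $\rho$ avoiding $x$ and $y$ supplies, in full algebraic detail, the verification the paper leaves implicit, including the simultaneous cancellations when both $\rho \cup \{x\}$ and $\rho \cup \{y\}$ lie in $\gamma$.
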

\begin{proof}
  The $p$-simplices in $\gamma$ that neither belong to $\Star{\gamma}{x}$ nor to $\Star{\gamma}{y}$ also belong to $\gamma'$.
  Indeed, they exhaust $\gamma' \setminus \Star{\gamma'}{z}$, which implies $\gamma + \gamma' = \Star{\gamma}{x} \cup \Star{\gamma}{y} \cup \Star{\gamma'}{z}$.
  By construction of $\Gamma = \Sweep{x}{y}$, and the assumption that $\gamma$ is a $p$-cycle, the right-hand side of this equation is the boundary of $\Gamma$.
\end{proof}

%%%%%%%%%%%%%%%%%%%%%%%%%%%%%%%%%%%%%%%%%%%%%%%
%%%%%%%%%%%%%%%%%%%%%%%%%%%%%%%%%%%%%%%%%%%%%%%
\section{Snap Complex}
\label{sec:3}
%%%%%%%%%%%%%%%%%%%%%%%%%%%%%%%%%%%%%%%%%%%%%%%
%%%%%%%%%%%%%%%%%%%%%%%%%%%%%%%%%%%%%%%%%%%%%%%

For values $0 \leq s \leq t$, we write $\Cech{s} = \Cech{s}(A)$ for the \v{C}ech complex, $\Betti{p}(\Cech{s})$ for the rank of its $p$-th (reduced) homology group, $\Hgroup{p}(\Cech{s})$, and $\Betti{p} (\Cech{s}, \Cech{t})$ for the rank of the image of $\Hgroup{p}(\Cech{s})$ in $\Hgroup{p}(\Cech{t})$ induced by the inclusion $\Cech{s} \subseteq \Cech{t}$.
\begin{definition}
  \label{dfn:snap_complex}
  Let $\Psi$ be a partition of $\Rspace^d$ into cells, and call the supremum diameter of the sets in $\Psi$ the \emph{mesh} of the partition, denoted by $\Mesh{\Psi}$.
  %Let $q \colon A \to \Psi$ be defined by inclusion, and call $Q_s = q (\Cech{s})$ the \emph{snap complex} of the \v{C}ech complex $\Cech{s}$ along $\Psi$.
  The \emph{snap complex}, $Q_s$, of the \v{C}ech complex $\Cech{s}$ along $\Psi$ is defined as follows:
  the vertices of $Q_s$ are the cells that contain at least one point of $A$, and a set $\{\psi_0, \psi_1, \ldots, \psi_p\} \subseteq \Psi$ of distinct cells is a $p$-simplex in $Q_s$ iff there exist vertices $x_i \in \psi_i$ for $0 \leq i \leq p$ such that $\{x_0, x_1, \ldots, x_p\} \in \Cech{s}$.
\end{definition}
Note that mapping each $p$-simplex $\{x_0, x_1, \ldots, x_p\}\in \Cech{1}$ with $x_i \in \psi_i\in\Psi$~($0 \leq i \leq p$) to a simplex $\{\psi_0, \psi_1, \ldots, \psi_p\}\in Q_1$ defines a surjective simplicial map $q \colon \Cech{1} \to Q_1$.
%Note that $q$ applied to $\Cech{1}$ is a surjective simplicial map to $Q_1$.
Letting $\gamma$ be a $p$-cycle in $\Cech{1}$, we call $\alpha = q(\gamma)$ its \emph{image}, and $\gamma$ a \emph{preimage $p$-cycle} of $\alpha$.
Given two preimage $p$-cycles of $\alpha$, they may or may not be homologous in $\Cech{1}$.
The next lemma, however, guarantees that they are homologous in $\Cech{1+\Mesh{\Psi}}$.
\begin{lemma}
  \label{lem:homologous_preimages}
  Let $\ee = \Mesh{\Psi}$, $\alpha$ a $p$-cycle in $Q_1$, and $\gamma, \gamma_3$ two preimage $p$-cycles of $\alpha$ in $\Cech{1}$.
  Then $\gamma \sim \gamma_3$ in $\Cech{1+\ee}$.
\end{lemma}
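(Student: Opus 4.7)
The plan is to reduce both $\gamma$ and $\gamma_3$ to a common canonical lift of $\alpha$ by systematically gluing together vertices of a cycle that share a cell of $\Psi$. For each cell $\psi$ appearing in a vertex of $\alpha$, fix a representative vertex $z_\psi \in \psi \cap A$; since $q(\gamma)=q(\gamma_3)=\alpha$, the same set of cells is occupied by the vertices of $\gamma$ and of $\gamma_3$. Let $\alpha^*$ denote the $p$-cycle obtained from $\alpha$ by substituting $z_\psi$ for $\psi$ throughout. The goal is to show $\gamma \sim \alpha^* \sim \gamma_3$ in $\Cech{1+\ee}$, whence $\gamma \sim \gamma_3$ by transitivity.

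To reduce $\gamma$ to $\alpha^*$, I enumerate the non-representative vertices $v$ of $\gamma$ and successively glue each to $z_{q(v)}$; Lemma~\ref{lem:gluing_maintains_cycles} ensures we obtain a $p$-cycle after each step. After processing every vertex, each vertex has been replaced by the representative of its cell, and the $p$-simplices that cancel in modulo-$2$ arithmetic are exactly those corresponding to pairs of simplices of $\gamma$ whose images coincide in $\alpha$; the result is thus $\alpha^*$. Applying the same procedure to $\gamma_3$ with the same representatives also yields $\alpha^*$. At each gluing step, Lemma~\ref{lem:sweep_is_filling} produces a sweep chain whose boundary witnesses that the two consecutive intermediate cycles are homologous; summing these chains over all steps gives a single $(p+1)$-chain with boundary $\gamma + \alpha^*$, and analogously for $\gamma_3$.

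The technical core is verifying that every intermediate cycle and every sweep chain lies inside $\Cech{1+\ee}$. The key invariant, preserved throughout the reduction, is that each $p$-simplex $\tau$ appearing in a current cycle, as well as each $(p+1)$-simplex $\tau \cup \{z_{q(v)}\}$ appearing in a sweep chain, has its vertices located in precisely the cells occupied by the vertices of some original simplex $\sigma_0 \in \gamma$. This holds because each gluing identifies two vertices in a single cell, and the cone vertex $z_{q(v)}$ of a sweep chain belongs to a cell already represented in $q(\sigma_0)$. Consequently, the Hausdorff distance between $\sigma_0$ and the simplex in question is at most $\Mesh{\Psi}=\ee$, so Lemma~\ref{lem:smallest_enclosing_spheres} bounds its smallest enclosing radius by $r(\sigma_0)+\ee \leq 1+\ee$. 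Combining the resulting filling chain for $\gamma \sim \alpha^*$ with its counterpart for $\gamma_3 \sim \alpha^*$ produces the desired filling of $\gamma + \gamma_3$ in $\Cech{1+\ee}$. The main obstacle is precisely this cell-support bookkeeping, which has to be tracked inductively over the sequence of gluings rather than dispatched by a single global estimate.
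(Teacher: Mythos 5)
Your proof is correct and follows essentially the same approach as the paper: glue vertices lying in a common cell, use the sweep chain of Lemma~\ref{lem:sweep_is_filling} as the filling, and bound the radius of each swept simplex by comparing it to an original simplex of $\gamma$ via Lemma~\ref{lem:smallest_enclosing_spheres} and the mesh bound. The only difference is organizational: you reduce both $\gamma$ and $\gamma_3$ to a single canonical lift $\alpha^*$ determined by fixed cell representatives, whereas the paper interpolates in three stages (reduce $\gamma$, reduce $\gamma_3$, then relocate vertices within cells to match); the cell-support invariant you track is exactly the estimate the paper makes at each step.
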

\begin{proof}
  We construct a sequence of homologous $p$-cycles that interpolates between $\gamma$ and $\gamma_3$ in $\Cech{1+\ee}$.
  There is an initial sequence interpolating between $\gamma$ and $\gamma_1$, a middle sequence interpolating between $\gamma_1$ and $\gamma_2$, and a terminal sequence interpolating between $\gamma_2$ and $\gamma_3$.

  \smallskip
  The initial sequence reduces the number of vertices until the preimage $p$-cycle contains at most one vertex per cell in $\Psi$.
  Indeed, if $\gamma$ has vertices $x \neq y$ in the same cell, then we can glue $x$ and $y$, as described in Section~\ref{sec:2}, which produces a new $p$-chain, $\gamma' = \Glue{x}{y}$.
  This operation introduces a new vertex, $z$.
  We are free to choose its location, and to facilitate the repetition of this argument, we choose it where $y$ used to be.
  By Lemma~\ref{lem:gluing_maintains_cycles}, $\gamma'$ is a $p$-cycle with one fewer vertices than $\gamma$.
  Let $\Gamma = \Sweep{x}{y}$ be the $(p+1)$-chain from Lemma~\ref{lem:sweep_is_filling}.
  If all simplices in $\Gamma$ belong to $\Cech{1+\ee}$, then $\gamma$ and $\gamma'$ are homologous in $\Cech{1+\ee}$, so assume that at least one simplex $\upsilon \in \Gamma$ does not belong to $\Cech{1+\ee}$.
  Then $r(\upsilon) > 1 + \ee$.
  By construction of $\Gamma$, and the choice of $z$'s location in the cell that also contains $x$ and $y$, there is a $p$-simplex $\tau$ in $\gamma$ that has a vertex in every cell that contains a vertex of $\upsilon$.
  Since the points in the same cell have distance at most $\ee$ from each other, Lemma~\ref{lem:smallest_enclosing_spheres} implies $r(\tau) > r(\upsilon) - \ee > 1$.
  But then $\tau$ is not in $\Cech{1}$ and neither is $\gamma$, which contradicts the assumptions.
  This implies $\gamma \sim \gamma'$ in $\Cech{1+\ee}$.
  By repeating the argument,  all $p$-cycles in the initial sequence are homologous in $\Cech{1+\ee}$ and, in particular, $\gamma \sim \gamma_1$ in $\Cech{1+\ee}$.

  \smallskip
  The terminal sequence of $p$-cycles reduces the number of vertices of $\gamma_3$ if read from the end forward.
  Appealing again to Lemmas~\ref{lem:smallest_enclosing_spheres}, \ref{lem:gluing_maintains_cycles}, and \ref{lem:sweep_is_filling}, all $p$-cycles in the terminal sequence are homologous in $\Cech{1+\ee}$ and, in particular, $\gamma_2 \sim \gamma_3$ in $\Cech{1+\ee}$.

  \smallskip
  Since $\gamma$ and $\gamma_3$ are preimage $p$-cycles of the same $p$-cycle, $\alpha$, their vertices lie in the same cells of $\Psi$, and so do the vertices of $\gamma_1$ and $\gamma_2$.
  The middle sequence interpolates between the latter two $p$-cycles by changing one vertex at a time to a possibly different point in the same cell.
  Appealing to Lemmas~\ref{lem:smallest_enclosing_spheres}, \ref{lem:gluing_maintains_cycles}, and \ref{lem:sweep_is_filling}, the $p$-cycles in the middle sequence are again homologous in $\Cech{1+\ee}$ and, in particular, $\gamma_1 \sim \gamma_2$ in $\Cech{1+\ee}$.
  But now we have $\gamma \sim \gamma_1 \sim \gamma_2 \sim \gamma_3$ in $\Cech{1+\ee}$ and therefore $\gamma \sim \gamma_3$ in $\Cech{1+\ee}$, as claimed.
\end{proof}

\noindent \emph{Remark on Vietoris--Rips complexes.}
Lemma~\ref{lem:homologous_preimages} generalizes to Vietoris--Rips complexes.
Indeed, its proof generalizes provided we adapt Lemma~\ref{lem:smallest_enclosing_spheres}, which in its current formulation is specific to \v{C}ech complexes.
%By definition, the radius of a simplex in the Vietoris--Rips complex is the maximum radius of any of its edges.
For the Vietoris--Rips complexes, we read the radius $r(B)$ as half of the maximum distance between any two vertices in $B$.
Then it is still true that the difference in radii is bounded from above by the Hausdorff distance between the two sets of vertices.
In other words, Lemma~\ref{lem:smallest_enclosing_spheres} also applies to Vietoris--Rips complexes.

\smallskip
By Lemma~\ref{lem:homologous_preimages}, any two preimage cycles of a cycle in $Q_1$ that already exist in $\Cech{1}$ are homologous in $\Cech{1+\ee}$.
We can therefore bound the persistent Betti numbers by the Betti number of the snap complex.
\begin{corollary}
  \label{cor:persistent_Betti_numbers}
  With $\ee = \Mesh{\Psi}$, we have $\Betti{p} (\Cech{1}, \Cech{1+\ee}) \leq \Betti{p}(Q_1)$ for every $p$.
\end{corollary}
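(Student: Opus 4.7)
The plan is to bound the persistent Betti number by comparing two linear maps out of $\Hgroup{p}(\Cech{1})$: the inclusion-induced map $i_* \colon \Hgroup{p}(\Cech{1}) \to \Hgroup{p}(\Cech{1+\ee})$, whose image has rank $\Betti{p}(\Cech{1}, \Cech{1+\ee})$ by definition, and the map $q_* \colon \Hgroup{p}(\Cech{1}) \to \Hgroup{p}(Q_1)$ induced by the simplicial projection $q$, whose image has rank at most $\Betti{p}(Q_1)$. If I can establish the inclusion $\ker q_* \subseteq \ker i_*$, then rank-nullity over $\Zspace / 2 \Zspace$ gives
\[
  \Betti{p}(\Cech{1}, \Cech{1+\ee}) = \dim \Hgroup{p}(\Cech{1}) - \dim \ker i_* \leq \dim \Hgroup{p}(\Cech{1}) - \dim \ker q_* = \Rank{q_*} \leq \Betti{p}(Q_1),
\]
and the corollary follows.

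To prove the kernel containment, I would take a $p$-cycle $\gamma$ in $\Cech{1}$ whose image $q(\gamma)$ is a boundary in $Q_1$, say $q(\gamma) = \partial \beta$ for some $(p+1)$-chain $\beta$ in $Q_1$, and exhibit $\gamma$ as trivial in $\Cech{1+\ee}$. The key step is to lift $\beta$ back to $\Cech{1}$: by Definition~\ref{dfn:snap_complex}, every simplex of $Q_1$ admits at least one preimage simplex in $\Cech{1}$, so I pick one lift per $(p+1)$-simplex of $\beta$ and assemble them into a $(p+1)$-chain $B$ in $\Cech{1}$ satisfying $q(B) = \beta$. Then $\partial B$ is a $p$-cycle in $\Cech{1}$ with $q(\partial B) = \partial q(B) = \partial \beta = q(\gamma)$, so $\gamma$ and $\partial B$ are two preimage $p$-cycles of the same cycle $q(\gamma)$ in $Q_1$. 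Lemma~\ref{lem:homologous_preimages} will then force $\gamma \sim \partial B$ in $\Cech{1+\ee}$, and since $\partial B$ is already a boundary in $\Cech{1} \subseteq \Cech{1+\ee}$, this yields $\gamma \sim 0$ in $\Cech{1+\ee}$, as desired.

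The main technical work has already been packaged in Lemma~\ref{lem:homologous_preimages}. The only remaining care point is verifying that $q(B) = \beta$ holds on the nose as $(p+1)$-chains, with no surprise cancellations when the chosen preimages are summed mod $2$. This is really a bookkeeping observation: distinct simplices of $\beta$ have distinct vertex sets in $Q_1$, so their chosen lifts have distinct vertex sets in $\Cech{1}$ and $q$ maps these distinct lifts bijectively onto the simplices of $\beta$. With this settled, the proof needs only rank-nullity and the gluing machinery already developed earlier in the paper.
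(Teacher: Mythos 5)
Your proof is correct and follows essentially the same route as the paper: both arguments lift the bounding $(p+1)$-chain of $q(\gamma)$ through the surjective simplicial map $q$ and invoke Lemma~\ref{lem:homologous_preimages} to conclude $\gamma \sim \partial B \sim 0$ in $\Cech{1+\ee}$. Your explicit rank--nullity framing via $\ker q_* \subseteq \ker i_*$ is merely a more formal packaging of the paper's contradiction argument that non-trivial persistent classes have non-trivial images in $Q_1$.
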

\begin{proof}
  Recall that $\Betti{p} (\Cech{1}, \Cech{1+\ee} )$ is the rank of the persistent homology group that captures all $p$-cycles in $\Cech{1+\ee}$ that already exist in $\Cech{1}$.
  It suffices to prove that the images of non-trivial $p$-cycles in $\Cech{1+\ee}$ that already exist in $\Cech{1}$ are non-trivial $p$-cycles in $Q_1$.
  To derive a contradiction, let $\gamma$ be a $p$-cycle in $\Cech{1}$ that is non-trivial in $\Cech{1+\ee}$, and assume that $\alpha = q(\gamma)$ is trivial in $Q_1$.
  Hence, there exists a $(p+1)$-chain, $\Alpha$, in $Q_1$ with $\partial \Alpha = \alpha$, and because $q$ is surjective, there also exists a $(p+1)$-chain $\Gamma$ in $\Cech{1}$ with $q (\Gamma) = \Alpha$.
  Noting that $\gamma$ and $\partial \Gamma$ are both $p$-cycles in $\Cech{1}$ whose images under $q$ are equal to $\alpha$, we obtain $\gamma \sim \partial \Gamma \sim 0$ in $\Cech{1+\ee}$ by Lemma~\ref{lem:homologous_preimages}.
  This contradicts that $\gamma$ is non-trivial in $\Cech{1+\ee}$ and implies the claimed inequality.
\end{proof}
\noindent \emph{Remark on the proof of Corollary~\ref{cor:persistent_Betti_numbers}.}
Alternatively, Corollary~\ref{cor:persistent_Betti_numbers} can be deduced using a more algebraic argument based on contiguous simplicial maps.
Starting with a simplex in $\Cech{1}$, one may either include it directly into $\Cech{1+\ee}$ or first map it to $Q_1$ and then further to $\Cech{1+\ee}$, which is possible by Lemma~\ref{lem:smallest_enclosing_spheres}.
These two simplicial maps are contiguous, meaning that for every simplex in $\Cech{1}$, the union of its images under the two maps is also a simplex in $\Cech{1+\ee}$.
It is a standard fact that contiguous simplicial maps induce the same homomorphism on homology groups.
Consequently, the triangle of simplicial maps formed by $\Cech{1}\to Q_1\to\Cech{1+\ee}$ and $\Cech{1}\to\Cech{1+\ee}$ commutes on the homology level, which immediately yields Corollary~\ref{cor:persistent_Betti_numbers}.
Although this proof is arguably cleaner, we have chosen to present the current proof to make the geometric ideas more transparent.

\medskip \noindent \emph{Remark on the right-hand side of the inequality.}
It is easy to see that the bound in Corollary~\ref{cor:persistent_Betti_numbers} is not tight.
Take for example three points equally spaced on a circle of radius strictly between $1$ and $1 + \ee$.
Then $\Cech{1}$ has a $1$-cycle of three edges, while in $\Cech{1+\ee}$ this cycle is filled by the triangle.
Hence, $\Betti{1} (\Cech{1}, \Cech{1+\ee}) = 0$, which is strictly smaller than $\Betti{1} (Q_1) = 1$.
\begin{figure}[hbt]
    \centering \vspace{0.0in}
    \resizebox{!}{1.7in}{\input{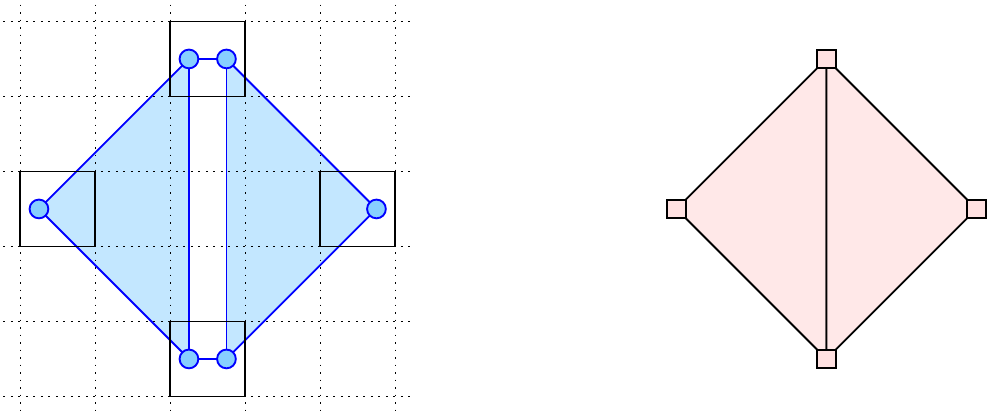_t}}
    \vspace{-0.0in}
    \caption{\footnotesize \emph{Left:} the \v{C}ech complex for six points inside four cells in the partition of the plane.
    Assuming the two triangles are isosceles, right-angled, and have smallest enclosing circles of  radius $1+\ee$, the narrow rectangle between the two triangles has a smallest enclosing circle with radius strictly larger than $1+\ee$.
    Hence, the boundary of the convex hexagon that passes through the six points is a non-trivial $1$-cycle in $\Cech{1+\ee}$, and for $\ee \leq \sqrt{2}-1$, it already exists in $\Cech{1}$.
    \emph{Right:} the image of the hexagon is a quadrangle in the snap complex.  Its boundary is a trivial $1$-cycle in $Q_{1+\ee}$ because the rectangle collapses to a single edge shared by the images of the two triangles.}
    \label{fig:2triangles}
\end{figure}
Note also that we cannot replace the upper bound $\Betti{p}(Q_1)$ in Corollary~\ref{cor:persistent_Betti_numbers} with $\Betti{p}(Q_{1+\ee})$ in general.
The reason is that the image of a homologically non-trivial cycle in $\Cech{1+\ee}$ may be homologically trivial in $Q_{1+\ee}$, even if it already exists in $\Cech{1}$; see Figure~\ref{fig:2triangles} for an example.
Indeed, we have $1 = \Betti{1} (\Cech{1}, \Cech{1+\ee}) > \Betti{1} (Q_{1+\ee}) = 0$ in this example.

\medskip \noindent \emph{Remark on Vietoris--Rips complexes.}
As mentioned earlier, Lemma~\ref{lem:homologous_preimages} generalizes to Vietoris--Rips complexes.
With this, the proof of Corollary~\ref{cor:persistent_Betti_numbers} generalizes to Vietoris--Rips complexes.

%%%%%%%%%%%%%%%%%%%%%%%%%%%%%%%%%%%%%%%%%%%%%%%
%%%%%%%%%%%%%%%%%%%%%%%%%%%%%%%%%%%%%%%%%%%%%%%
\section{The Upper Bound}
\label{sec:4}
%%%%%%%%%%%%%%%%%%%%%%%%%%%%%%%%%%%%%%%%%%%%%%%
%%%%%%%%%%%%%%%%%%%%%%%%%%%%%%%%%%%%%%%%%%%%%%%

We use a packing argument together with Corollary~\ref{cor:persistent_Betti_numbers} to prove that for every $\ee > 0$, the number of homology classes born before or at $1$ and dying after $1+\ee$ in the \v{C}ech filtration of $n$ points in $\Rspace^d$ is bounded from above by a constant times $n$.
This constant depends on $\ee$ and $d$ but not on $n$.
\begin{theorem}
  \label{thm:holes_in_Cech_complex}
  For every $\ee > 0$, there exists $c = c(\ee, d)$ such that $\Betti{p}(\Cech{1}, \Cech{1+\ee}) \leq c \cdot n$.
\end{theorem}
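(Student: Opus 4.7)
The plan is to invoke Corollary~\ref{cor:persistent_Betti_numbers} with a partition whose mesh is exactly $\ee$, which reduces the problem to bounding the ordinary $p$-th Betti number of the snap complex $Q_1$. For concreteness, I would let $\Psi$ be the grid of axis-aligned half-open cubes of side length $\ee/\sqrt{d}$, so that every cell has diameter $\ee$ and $\Mesh{\Psi} = \ee$; the vertices of $Q_1$ are a subset of the grid cells meeting $A$, so there are at most $n$ of them.

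The key remaining step is a local packing argument that bounds the number of $p$-simplices of $Q_1$ incident to any fixed vertex. Suppose $\{\psi_0,\psi_1,\ldots,\psi_p\}$ is a $p$-simplex of $Q_1$. By definition there exist $x_i \in \psi_i$ with $\bigcap_{i=0}^p B(x_i,1) \neq \emptyset$; picking $y$ in this intersection and any $z \in \psi_i$, we have $\dist{y}{z} \leq \dist{y}{x_i} + \dist{x_i}{z} \leq 1 + \ee$. Hence $\psi_0,\ldots,\psi_p$ all lie inside $B(y, 1+\ee)$, and in particular any cell in a common $p$-simplex with a fixed cell $\psi_0$ is contained in the ball of radius $2(1+\ee)$ about any point of $\psi_0$. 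Let $M = M(\ee,d)$ denote the maximum number of grid cells meeting such a ball; a direct volume comparison shows that $M$ is a constant depending only on $\ee$ and $d$.

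Combining these observations, every vertex of $Q_1$ is contained in at most $\binom{M-1}{p}$ $p$-simplices, so $Q_1$ has at most $n \binom{M-1}{p}/(p+1)$ $p$-simplices in total, and $\Betti{p}(Q_1)$ is bounded above by the same quantity. Moreover, every simplex of $Q_1$ has at most $M$ vertices, so the dimension of $Q_1$ is at most $M-1$, and the maximum of these per-dimension bounds over $p$ is itself a constant $c = c(\ee, d)$. Corollary~\ref{cor:persistent_Betti_numbers} then yields $\Betti{p}(\Cech{1},\Cech{1+\ee}) \leq \Betti{p}(Q_1) \leq c \cdot n$, as required.

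The conceptual work has all been done in Sections~\ref{sec:2} and~\ref{sec:3}; what remains is this routine packing count, and I do not expect any real obstacle beyond choosing the partition so that its mesh matches the persistence gap $\ee$, which the cubical grid achieves on the nose. The same argument goes through for Vietoris--Rips complexes, since the corresponding snap-complex bound carries over by the remarks following Lemma~\ref{lem:homologous_preimages} and Corollary~\ref{cor:persistent_Betti_numbers}.
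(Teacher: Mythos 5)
Your proposal is correct and follows essentially the same route as the paper: partition $\Rspace^d$ into half-open cubes of side $\sfrac{\ee}{\sqrt{d}}$ so the mesh is $\ee$, invoke Corollary~\ref{cor:persistent_Betti_numbers}, and then bound $\Betti{p}(Q_1)$ by the number of $p$-simplices via a volume-comparison packing count around a fixed cell. The only differences are cosmetic (you pack a ball of radius $2(1+\ee)$ where the paper packs a hypercube of side roughly $4$, and you divide by $p+1$ to correct for overcounting), and both yield the same linear bound with a constant depending only on $\ee$ and $d$.
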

\begin{proof}
  We partition $\Rspace^d$ into translates of $[0, \sfrac{\ee}{\sqrt{d}})^d$.
  The diameter of every cell is $\ee$, so we call this partition $\Psi$ and apply Corollary~\ref{cor:persistent_Betti_numbers}.
  Fixing $\psi_0 = [0, \sfrac{\ee}{\sqrt{d}})^d \in \Psi$, the cells that are connected to $\psi_0$ by an edge in $Q_1$ must contain a point at distance at most $2$ from a point in $\psi_0$.
  Therefore, such cells lie inside the hypercube $[-2-\sfrac{\ee}{\sqrt{d}}, 2+\sfrac{2\ee}{\sqrt{d}})^d$.
  Its volume is $(4+\sfrac{3\ee}{\sqrt{d}})^d$.
  Comparing this with the volume of a single cell, which is $(\sfrac{\ee}{\sqrt{d}})^d$, the number of such cells is at most
  \begin{align}
    C(\ee,d) &= \frac{(4+3\ee/\sqrt{d})^d}{(\ee/\sqrt{d})^d}
           = \left( 3 + \frac{4\sqrt{d}}{\ee} \right)^d.
    \label{eqn:C}
  \end{align}
  To span a $p$-simplex in $Q_1$, we pick the fixed cell and add $p$ from the at most $C = C(\ee,d)$ cells within the mentioned distance.
  We thus have at most $\binom{C}{p} n$ $p$-simplices in $Q_1$, where $n$ is the number of ways we can fix the first cell.
  The number of $p$-simplices is an upper bound on the $p$-th Betti number.
  By Corollary~\ref{cor:persistent_Betti_numbers}, the same upper bound applies to the number of $p$-cycles born before or at $1$ and dying after $1+\ee$.
  We have non-zero persistent Betti numbers only for $p < d$, so $c = \binom{C}{p} < 2^{C}$
  is a constant for which the claimed inequality holds.
\end{proof}

\noindent \emph{Remark on Vietoris--Rips complexes.}
Since Corollary~\ref{cor:persistent_Betti_numbers} generalizes to Vietoris--Rips complexes, so does Theorem~\ref{thm:holes_in_Cech_complex}.

%%%%%%%%%%%%%%%%%%%%%%%%%%%%%%%%%%%%%%%%%%%%%%%
%%%%%%%%%%%%%%%%%%%%%%%%%%%%%%%%%%%%%%%%%%%%%%%
\section{Discussion}
\label{sec:5}
%%%%%%%%%%%%%%%%%%%%%%%%%%%%%%%%%%%%%%%%%%%%%%%
%%%%%%%%%%%%%%%%%%%%%%%%%%%%%%%%%%%%%%%%%%%%%%%

The main result of this paper is a linear upper bound on the number of holes in the \v{C}ech complex of $n$ points in $\Rspace^d$ that persist from radius $r = 1$ to $r = 1+\ee$, in which $\ee$ is a fixed constant strictly larger than $0$.
The upper bound generalizes to the Alpha complex and the Vietoris--Rips complex and thus holds for three of the classic types of complexes used in topological data analysis \citep{Car09,EdHa10}.
The work reported in this short note raises a number of questions, and we mention two.

\smallskip
The first natural question is how small we can make $\ee > 0$ in our linear upper bound on the persistent Betti numbers when we think of $\ee$ as a function of $n$ that tends to $0$, rather than a fixed positive constant.
The construction in \citep{EdPa24} shows that the maximum $p$-th Betti number of a \v{C}ech complex with $n$ vertices in $\Rspace^d$ is $\Theta (n^m)$, with $m = \min \{p+1, \ceiling{\sfrac{d}{2}} \}$.
A detailed look at the analysis shows that for even $d \geq 4$, the persistence of the counted cycles is proportional to $\sfrac{1}{n^2}$ (and smaller for odd $d \geq 3$), in which we simplify by assuming that $d$ is a constant.
In other words, in even dimensions the lower bound extends to the persistent Betti numbers, $\Betti{p} (\Cech{1}, \Cech{1+\ee})$, provided $\ee = o(\sfrac{1}{n^2})$.
The upper bound for the constant of proportionality in Theorem~\ref{thm:holes_in_Cech_complex} depends on $\ee$ and $d$ in a way that suggests it grossly overestimates the number of holes that persist.
Indeed, for $\ee = \sfrac{1}{n^2}$, the constant $C$ in the proof grows as $\Theta (n^{2d})$ and the constant $c$ grows as $\Theta (n^{2d(d-1)})$.
Can this upper bound be improved to showing that the polynomially many holes in the lower bound construction of Edelsbrunner and Pach~\citep{EdPa24} are asymptotically as persistent as possible?
Alternatively, can this lower bound construction be improved to increase the persistence of the holes?
As a related result to this question, it is shown in~\citep{ChoKerRa19} that if $\ee<O(1/\log^{1+c}(n))$ for some $c\in(0,1)$, then the \v{C}ech filtration of an $n$-point set must have $n^{\Omega(\log\log n)}$ bars whose death-to-birth ratio is strictly greater than $1+\ee$.
However, this result holds under the assumption that the dimension satisfies $d=\Theta(\log n)$.

\smallskip
The second question is motivated by the last author's quest to prove the large deviation principle for persistent Betti numbers and persistence diagrams of random \v{C}ech filtrations (cf.~\citep{HO23,KHMT24}).
Let $A$ be a finite set of points in a large $d$-dimensional cube, partitioned into points $L$ and $R$ to the left and right of a vertical hyperplane, respectively.
Our goal is to approximate the persistent Betti numbers of the \v{C}ech filtration of $A$ by the sum of those for the \v{C}ech filtrations of $L$ and $R$.
Writing $a_p = \Betti{p} (\Cech{1}(A), \Cech{1+\ee}(A))$, $\ell_p = (\Cech{1}(L), \Cech{1+\ee}(L))$, and $r_p = \Betti{p} (\Cech{1}(R), \Cech{1+\ee}(R))$, we desire a bound on the absolute difference between $a_p$ and $\ell_p + r_p$.
Letting $M \subseteq A$ contain the points at distance at most $2(1+\ee)$ from the hyperplane, the vertices of every simplex in $\Cech{1+\ee}(A) \setminus (\Cech{1+\ee}(L) \cup \Cech{1+\ee}(R))$ must belong to $M$, so it is natural to estimate the absolute difference in terms of $M$:
assuming $\ee > 0$ is a fixed constant and $0 \leq p<d$, is it true that there exists a constant such that
\begin{align}
  | a_p -  (\ell_p + r_p) |
  \leq {\rm const} \cdot \card{M} ?
  \label{eqn:diff_PB}
\end{align}
In other words, is the absolute difference between these persistent Betti numbers bounded from above by a constant times the number of points in the narrow strip next to the hyperplane?
The absolute difference is of course bounded by the number of simplices spanned by these points (see, e.g.,~\cite[Lemma~2.11]{HST18} and~\cite[Proposition~16]{KHMT24}), but this only implies that the left-hand side of~\eqref{eqn:diff_PB} is bounded from above by $f_p(\Cech{1}(M)) + f_{p+1}(\Cech{1+\ee}(M))$, and thus by $2 (\card{M})^{p+2}$.
Therefore, the significance of the inequality~\eqref{eqn:diff_PB} lies in the linear bound with respect to the number of points in the narrow strip.

%%%%%%%%%% Declarations %%%%%%%%%%
%\section*{Declarations}
\subsection*{Funding}
The three authors are supported by 
the Wittgenstein Prize, Austrian Science Fund (FWF), grant no.\ Z 342-N31, by the DFG Collaborative Research Center TRR 109, Austrian Science Fund (FWF), grant no.\ {I 02979-N35, 
the U.S.\ National Science Foundation (NSF-DMS), grant no.\ 2005630,
and a JSPS Grant-in-Aid for Transformative Research Areas (A) (22H05107, Y.H.), EPSRC Research Grant EP/Y008642/1, respectively.}

%\subsection*{Conflict of interest}
%The first author is a member of the Scientific Board at the Journal of Applied and Computational Topology.
%The second author is on the Editorial Board at the Journal of Applied and Computational Topology.
%The authors have no financial or proprietary interests in any material discussed in this article.

\subsection*{Acknowledgements}
The authors would like to thank Michael Lesnick and Primoz Skraba for their helpful comments regarding sparse approximations of filtrations.
%We are also grateful to the anonymous referees for their careful reading and constructive suggestions.

%%===========================================================================================%%
%% If you are submitting to one of the Nature Portfolio journals, using the eJP submission   %%
%% system, please include the references within the manuscript file itself. You may do this  %%
%% by copying the reference list from your .bbl file, paste it into the main manuscript .tex %%
%% file, and delete the associated \verb+\bibliography+ commands.                            %%
%%===========================================================================================%%

\bibliography{sn-bibliography}% common bib file
%% if required, the content of .bbl file can be included here once bbl is generated
%%\input sn-article.bbl

\end{document}